\documentclass[a4paper,12pt]{article}
\usepackage{amsmath}
\usepackage{latexsym}
\usepackage{url}
\usepackage{color}
\numberwithin{equation}{section}

\def\R{{\bf R}}

\def\N{{\bf N}}

\def\d{\displaystyle}
\def\e{{\varepsilon}}

\def\wt{\widetilde}

\newtheorem{thm}{Theorem}[section]

\newtheorem{lem}{Lemma}[section]

\newtheorem{rem}{Remark}[section]

\title{A revisit via slicing method\\
on a quadratic semilinear wave equation\\
in two space dimensions
}
\author{
Masakazu Kato
\footnote{Laboratory of Mathematical Science, Graduate School of Science, University of Hyogo, 2167 Shosha, 
Himeji, Hyogo 671-2201, Japan.
e-mail: kato@sci.u-hyogo.ac.jp.}
, Hiroyuki Takamura
\footnote{Mathematical Institute,
Tohoku University,
 Aoba, Sendai 980-8578, Japan.
e-mail: hiroyuki.takamura.a1@tohoku.ac.jp.}
and
Kyouhei Wakasa
\footnote{
College of Liberal Arts, Mathematical Science Research Unit, 
Muroran Institute of Technology, 27-1, Mizumoto-cho, 
Muroran, Hokkaido 050-8585, Japan.
e-mail: wakasa@muroran-it.ac.jp
}}
\date{
\[
\begin{array}{ll}
\mbox{\footnotesize{\bf Keywords:}}
& \mbox{\footnotesize semilinear wave equation, two space dimensions, lifespan}\\
\mbox{\footnotesize{\bf MSC2020:}}
& \mbox{\footnotesize primary 35L71, secondary 35B44}\\
\end{array}
\]
}

\begin{document}
\maketitle
\begin{abstract}
In this paper,
we are focusing on the proof of the blow-up result
for a quadratic semilinear wave equation in two space dimensions.
There is a logarithmic loss in estimating the lifespan of classical solutions
if the 0th moment of the initial speed does not vanish.
This result is already known with almost sharp constants.
But in order to have a direct application to numerical analysis,
we show a simple proof by iteration argument of point-wise estimate of the solution
with the slicing technique.
\end{abstract}


\section{Introduction}
\par
We study the following initial value problem for semilinear wave equations
in two space dimensions
with unknown function $u:(x,t)\in \R^2\times[0,T)\rightarrow\R$.
\begin{equation}
\label{IVP}
\left\{
\begin{array}{l}
u_{tt}-\Delta u=|u|^p,\quad \mbox{in}\quad \R^2\times(0,T),\\
u(x,0)=\e f(x),\ u_t(x,0)=\e g(x),
\end{array}
\right.
\end{equation}
where  $T>0$, $p>1$ and $\e>0$ is a small parameter.
The initial data $f$ and $g$ are smooth functions of compact support. 
This problem plays a key role in establishing the optimality
of the general theory of nonlinear wave equations.
For a comprehensive overview and references concerning all spatial dimensions,
we refer the reader to Takamura \cite{Takamura_arXiv}.
\par
In this paper, we are interested in the lifespan estimates
for solutions to (\ref{IVP}).
The lifespan $T(\e)$ is defined by
\[
T(\e)=\sup\{T>0\ :\ \exists\ \mbox{a solution $u(x,t)$ of (\ref{IVP})
for arbitrarily fixed $(f,g)$.}\}.
\]
Throughout this paper, the term \lq\lq solution" means classical one when $p\ge2$,
and $C^1$ solution of the associated integral equations to (\ref{IVP}) when $1<p<2$.
We note that
Glassey \cite{G81a, G81b} proved that
$T(\e)<\infty$ for $1<p<p_S(2)$ and $T(\e)=\infty$ for $p>p_S(2)$.
Later, Schaeffer \cite{Sc85} proved that $T(\e)<\infty$ for $p=p_S(2)$.
Here,
\[
p_S(2)=\frac{3+\sqrt{17}}{2}
\]
is the so-called Strauss exponent which is a positive root of
\begin{equation}
\label{gamma}
\gamma(2,p):=1+\frac{3}{2}p-\frac{1}{2}p^2=0.
\end{equation}
In this sense, $p_S(2)$ is the critical exponent of the problem (\ref{IVP}).
This phenomenon is regarded as two dimensional case of Strauss conjecture.
See Strauss \cite{St81} in general dimensions.
\par
In the blow-up case, i.e. $1<p\le p_S(2)$,
we are interested in estimating of the lifespan $T(\e)$ to
clarify the stability of the trivial solution since the solution is unique.
We write $T(\e)\sim A(\e,C)$ if there are positive constants,
$C_1$ and $C_2$, independent of $\e$ satisfying $A(\e,C_1)\le T(\e)\le A(\e,C_2)$.
\par
When $1<p<p_S(2)$, we have the following results.
\begin{equation}
\label{lifespan_2d}
T(\e)\sim
\left\{
\begin{array}{cl}
C\e^{-(p-1)/(3-p)} & \mbox{for $1<p<2$ and $\d\int_{\R^2}g(x)dx\neq0$,}\\
Ca(\e)
&\mbox{for $p=2$ and $\d\int_{\R^2}g(x)dx\neq0$},\\
C\e^{-p(p-1)/\gamma(2,p)} & \mbox{for $1<p\le2$ and $\d\int_{\R^2}g(x)dx=0$},\\
&\mbox{\quad or, $2<p<p_S(2)$},
\end{array}
\right.
\end{equation}
where $a=a(\e)$ is a number satisfying 
\begin{equation}
\label{a}
a^2\e^2\log(1+a)=1.
\end{equation}
We note that $\gamma(2,2)=2$ and
\[
\frac{p-1}{3-p}<\frac{p(p-1)}{\gamma(2,p)}
\Longleftrightarrow 1<p<2
\]
which means that the first and second cases produce shorter lifespans than the third one in (\ref{lifespan_2d}).
This estimate is due to Lindblad \cite{L90} for $p=2$ with almost sharp constants
and  Zhou \cite{Z93} for $1<p<2$ with zero 0th moment of $g$.
More precisely, \cite{L90} obtained that
\begin{equation}
\label{lifespan_lim}
\left\{
\begin{array}{ll}
\d \exists\lim_{\e\rightarrow+0}a(\e)^{-1}T(\e)>0
& \mbox{when $\d\int_{\R^2}g(x)dx\neq0$},\\
\d\exists\lim_{\e\rightarrow+0}\e T(\e)>0
& \mbox{when $\d\int_{\R^2}g(x)dx=0$},
\end{array}
\right.
\quad\mbox{for $p=2$}.
\end{equation}
The case of $1<p<2$ with non-zero 0th moment of $g$ is due to
Takamura \cite{Takamura15} for the upper bound,
and Imai, Kato, Takamura and  Wakasa \cite{IKTW19} for the lower bound.
We also note that
the non-zero 0th moment case, or zero 0th moment case are categorized in
heat-like estimate, or wave-like estimate respectively.
The heat-like case is closely related to Fujita exponent in two space dimension $p_F(2)=2$
which is the critical exponent of semilinear heat equations.
This observation has been discussed in analysis on semilinear damped wave equations.
See Lai, Schiavone and Takamura \cite{LST20} for its details and references.
When $p=p_S(2)$,
Zhou \cite{Z93} proved that
\begin{equation}
\label{lifespan_critical}
T(\e)\sim\exp\left(C\e^{-p(p-1)}\right).
\end{equation}
\par
The main purpose of this article is to prove the upper bound of $T(\e)$ in the second case of (\ref{lifespan_2d})
only by point-wise estimates and an iteration argument in preparation for future work on  \lq\lq blow-up boundary"
of solutions and its numerical analysis.
Takamura \cite{Takamura_arXiv} recently reproved the upper bound
in the critical case (\ref{lifespan_critical}) in this direction.
The blow-up boundary was first introduced by Caffarelli and Friedman \cite{CF85, CF86}.
See also Sasaki \cite{Sasaki18} for derivative type nonlinearities with numerical simulations.
Moreover, Matsuya \cite{Matsuya13} first introduced discrete semilinear wave equations,
and Tsubota, Higashi, Matsuya, Sasaki and Tokihiro \cite{THMST} recently studied
the lifespan estimate for the discrete case in one space dimension.
\par
The reason to consider the special case of $p=2$ and non-zero 0th moment of initial speed
in this paper is that it is the critical case of heat-like lifespan estimates.
Moreover, only this case cannot be handled by functional method developed
by Ikeda, Sobajima and Wakasa \cite{ISW19}
which based on the weak form with a special choice of the test function
and has wide applications to many different kinds of equations.
We note that when iteration argument introduced by John \cite{J79} is applied
to the critical case in which the solution has logarithmic growth in time,
the slicing technique by Agemi, Kurokawa and Takamura \cite{AKT00} is required.
Such a technique is widely applicable to various inequalities with critical exponents
including integral inequalities drived from ordinary differential inequalities of the second order.
For a recent example, see Sasaki, Shao and Takamura \cite{SST25} 
where critical blow-up results for power-type nonlinearities involving spatial derivatives are established.


\section{Theorem and its proof by slicing method}
As in Introduction, the purpose of this paper is to reprove the following theorem
by means of an iteration argument combined with the slicing technique.

\begin{thm}
\label{thm:main}
Let $p=2$.
Assume that $f\in C_0^3(\R^2)$ and $g\in C_0^2(\R^2)$ satisfy $\d \int_{\R^2} g(x)dx>0$.
Then, there is a positive constant $\e_0=\e_0(f,g)$ such that
a classical solution of (\ref{IVP}) cannot exist as far as $T$ satisfies
\[
T> Ca(\e)
\quad{for}\ 0<\e\le\e_0,
\]
where $a$ is defined in (\ref{a}) and $C$ is a positive constant independent of $\e$.
\end{thm}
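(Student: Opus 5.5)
The plan is to work with the integral form of (\ref{IVP}) in two dimensions and run a John-type iteration of pointwise lower bounds. The slicing technique of \cite{AKT00} will absorb the logarithmic loss. Write $u=\e u^0+L(u^2)$. Here $u^0$ is the free solution with data $(f,g)$, and
\[
L(F)(x,t)=\frac{1}{2\pi}\int_0^t\!\!\int_{|y-x|<t-s}\frac{F(y,s)}{\sqrt{(t-s)^2-|y-x|^2}}\,dy\,ds .
\]
Assume $\mbox{supp}\,(f,g)\subset\{|x|\le k\}$.

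The first step is a lower bound for the free solution. Since $\int g>0$, the two-dimensional Poisson formula gives, in the interior region $\{|x|\le t-k\}$ with $t$ large,
\[
u^0(x,t)\ge \frac{c_0}{\sqrt{(t+|x|)(t-|x|)}} .
\]
The $f$-contribution decays faster, of order $(t-|x|)^{-3/2}(t+|x|)^{-1/2}$, so it can be absorbed once $t-|x|\ge k_0$ for a large fixed $k_0$. Since $u^2\ge0$, this gives $u\ge \e c_0 (t+r)^{-1/2}(t-r)^{-1/2}$ in $\Sigma_0=\{t-r\ge k_0\}$, where $r=|x|$.

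Next comes the iteration. Restrict attention to the region $\{t-r\ge k_0,\ r\le t/2\}$, say, or more conveniently work along $t-r\ge l_j$. There one has $u\ge \e c_0/t$, and $L$ applied to a radial lower bound can be estimated from below by reducing to an integral in the $(\lambda,\rho)$ variables. Inserting $u^2\ge \e^2c_0^2 (s+\rho)^{-1}(s-\rho)^{-1}$ into $L$ gives, after restricting to the interior cone,
\[
L(u^2)(x,t)\ge C\e^2\,\frac{\log\bigl((t-r)/k_0\bigr)}{\sqrt{(t+r)(t-r)}}\cdot(\text{something of order } t-r) .
\]
More precisely, the heat-like (non-zero moment) case is exactly critical in the sense that $\int (t-s)^{-1}\cdot s\cdot s^{-2}\,ds\sim \log$. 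So the ansatz is
\[
u(x,t)\ge \frac{M_j\,\bigl(\log\frac{t-r}{l_j}\bigr)^{a_j}}{\sqrt{(t+r)(t-r)}}\quad\mbox{in }\Sigma_j=\{t-r\ge l_j\},
\]
with slices $l_j=k_0\prod_{i\le j}(1+2^{-i})$ increasing to a finite limit $l_\infty$. Substituting this into $u\ge L(u^2)$, the loss from shrinking the domain $\Sigma_{j}\to\Sigma_{j+1}$ is controlled because $\log\frac{t-r}{l_j}\ge \frac{1}{C2^{j}}\log\frac{t-r}{l_{j+1}}$ type inequalities hold on $\Sigma_{j+1}$. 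The recursion will be $a_{j+1}=2a_j+1$ and $M_{j+1}\ge C\,M_j^2/(2^{j}(2a_j+1))$. For these exponents to grow, the right-hand side must recover $(t+r)^{-1/2}(t-r)^{-1/2}$ times an extra factor. Here the precise computation matters: I expect the extra factor to be $t\cdot t^{-1}\cdot\log$. Since the target $a^2\e^2\log a=1$ involves $t^2\log t$, I anticipate a factor $t^{a_j}$-type weight. So I will instead carry the ansatz $u\ge M_j\,(t-r)^{b_j}(\log\frac{t-r}{l_j})^{a_j}/(t+r)^{1/2}$, with $b_j$ and $a_j$ both produced by the recursion.

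The final step is to solve the recursion. It gives $\log M_j\ge -C2^j$ together with $M_j\gtrsim (C\e)^{2^j}$. The resulting lower bound at a point such as $r=0$, $t\ge 2l_\infty$, has the form $u\ge \bigl(C\e^2 t^2\log t\bigr)^{2^j}\times(\text{fixed})$. If $\e^2t^2\log t$ exceeds a large constant, letting $j\to\infty$ gives $u=\infty$, a contradiction; this yields $T\le Ca(\e)$.

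The main obstacle is the precise lower estimate of $L$ on the ansatz in the two-dimensional kernel. The kernel $((t-s)^2-|y-x|^2)^{-1/2}$ has to be integrated in polar coordinates, reducing it to a characteristic-coordinate integral over $\alpha=s+\rho$ and $\beta=s-\rho$. The aim is to show that exactly one logarithm is gained per step, uniformly in $j$ up to the $2^{-j}$ slicing loss. I would first try the standard reduction: restrict to $\rho\le s/2$, use the kernel bound from \cite{Takamura_arXiv}, and integrate $\beta$ from $l_j$ to $t-r$.
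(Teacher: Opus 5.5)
\textbf{There is a genuine gap.} Your outline has the right global shape: a free lower bound $\e B(t+r)^{-1/2}(t-r)^{-1/2}$, a first-step bound of the form (\ref{est:1}), an ansatz like (\ref{est:j}), and blow-up once $\e^2t^2\log t$ is large. But the step that constitutes the proof is left open. That step is the lower bound of the 2D Duhamel term on your ansatz and the recursion it produces. You change ansatz midway without computing the new exponents. The final form $(C\e^2t^2\log t)^{2^j}$ is asserted to fit the target rather than derived.

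More seriously, the mechanism you set as your aim, ``exactly one logarithm is gained per step'', is false. You also need the $(t-r)$-power to grow in order to reach $t^2$. With one log per step on top of that, the log exponent would be $2^j-1$ and the blow-up functional would be $\e^2t^2(\log t)^2$. That gives $T(\e)\le C\e^{-1}/\log(1/\e)$, which contradicts Lindblad's lower bound $T(\e)\ge c\,a(\e)$, where $a(\e)\approx\e^{-1}(\log(1/\e))^{-1/2}$.

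In the paper the logarithm appears only once, in the first step:
\begin{itemize}
\item Agemi's inequality (\ref{basic}) on $k\le t-r\le r$, rewritten in $\alpha=\tau+\lambda$, $\beta=\tau-\lambda$, gives the frame (\ref{frame}).
\item Inserting $\e^2\wt{u^0}^2\ge\e^2B^2/(\alpha\beta)$ makes the $\beta$-integrand $(t-r-\beta)/\beta$. After integration by parts and slicing this yields $(1-l_0/l_1)(t-r)\log\frac{t-r}{l_1M}$.
\item For $j\ge1$ the integrand $(t-r-\beta)(\beta/l_jM)^{2a_j}(\log\frac{\beta}{l_jM})^{2b_j}$ is no longer critical. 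Its integral is at most $\frac12(t-r)^2\bigl(\frac{t-r}{l_jM}\bigr)^{2a_j}\bigl(\log\frac{t-r}{l_jM}\bigr)^{2b_j}$, so no new log can appear.
\item One gets $a_{j+1}=2a_j+\frac32$ and $b_{j+1}=2b_j$, i.e.\ $a_j=2^j-\frac32$ and $b_j=2^{j-1}$. This is exactly what produces $I\approx\e^2X^2\log X$ with $X=(t-r)/(2M)$.
\end{itemize}

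Two further points of your plan would not work as stated.

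\emph{Where the slicing loss occurs.} It is not in comparing $\log\frac{t-r}{l_j}$ with $\log\frac{t-r}{l_{j+1}}$; that comparison is trivial since $l_j<l_{j+1}$. The loss comes from cutting the $\beta$-interval down to $[l_j(t-r)/l_{j+1},\,t-r]$. On that interval the increasing factors are bounded below by their values at $(t-r)/(l_{j+1}M)$. The price is $\frac12(1-l_j/l_{j+1})^2(t-r)^2\ge 2^{-2j-5}(t-r)^2$. These $4^{-j}$ losses cost only a factor $4^{-S2^{j-1}}$ in total, because $\sum_i i2^{-i}<\infty$.

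\emph{Where the first-step logarithm comes from.} It comes from the part of the Duhamel integral adjacent to the light cone, where $s-\rho$ is of order $M$ (small $\beta$). If you restrict to $\rho\le s/2$ as you suggest, the inner integral $\int_0^{s/2}\rho\,(s^2-\rho^2)^{-1}d\rho=\frac12\log\frac43$ is bounded, and the log is lost. You would then end with $\e^2t^2$, i.e.\ merely $T\le C\e^{-1}$, which is weaker than the theorem. Evaluating at $r=0$ would also need an interior kernel estimate that you have not supplied. The paper avoids this by iterating throughout in $\Sigma_j=\{l_jM\le t-r\le r\}$ and choosing $t_0=2r_0$.
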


\begin{rem}
This theorem is weaker version of the result of Lindblad \cite{L90}.
In view of (\ref{lifespan_lim}), \cite{L90} established the lifespan estimate with a specific constant $C$
both from below and above.
The proof of Takamura \cite{Takamura15} is due to a combination
with the improved Kato's lemma on ordinary differential inequalities and 
a point-wise estimate of a solution to the free wave equation,
but it requires the stronger assumption that $f(x)\equiv0$ and $g(x)\ge0(\not\equiv0)$.
We note that  the conclusion shows that
\[
T(\e)\le Ca(\e)
\quad{for}\ 0<\e\le\e_0
\]
with the same $\e_0$ and $C$.
\end{rem}

From now on, we may assume that
\begin{equation}
\label{supp_initial}
\mbox{supp}\ (f,g)\subset\{x\in\R^2\ :\ |x|\le k\},\ k\ge1
\end{equation}
without loss of generality.
By finite propagation speed of the nonlinear wave,
(\ref{supp_initial}) implies that
\begin{equation}
\label{supp_sol}
\mbox{supp}\ u\subset\{(x,t)\in\R^2\times[0,T]\ :\ |x|\le t+k\}.
\end{equation}
In order to prepare the tools needed for the proof,
we introduce the integral equation equivalent to the classical solution of (\ref{IVP})
with $p=2$,
\begin{equation}
\label{integral}
u=\e u^0+L(u^2)\quad\mbox{in}\ \R^2\times[0,T),
\end{equation}
where $u^0=u^0(x,t)$ is the solution to the free wave equation with the initial data $(f,g)$
and $L$ is Duhamel's term,
\[
L(v)(x,t):=\frac{1}{2\pi}\int_0^tds\int_{|x-y|\le t-s}\frac{v(y,s)}{\sqrt{(t-s)^2-|x-y|^2}}dy.
\]
For a function $u=u(x,t)$, define a spherical mean of $u$ at $x=0$ with radius $r\geq0$ defined by
\[
\wt{u}(r,t):=\frac{1}{2\pi}\int_{|\omega|=1}u(r\omega,t)dS_\omega.
\]

\par
First, let us employ the following lemma which will be a frame of the iteration. 

\begin{lem}[\cite{Agemi91}]
Let $p=2$.
Assume that $(f,g)\in C_0^3(\R^2)\times C_0^2(\R^2)$ satisfies (\ref{supp_initial}).
Then, a classical solution of (\ref{IVP}) satisfies
\begin{equation}
\label{basic}
\wt{u}(r,t)\ge\e\wt{u^0}(r,t)+
\frac{1}{2\pi\sqrt{r}}\iint_{T_{r,t}}\sqrt{\lambda}\wt{u}(\lambda,\tau)^2d\lambda d\tau
\quad\mbox{for}\ k\le t-r\le r,
\end{equation}
where a domain $T_{r,t}$ is defined by
\[
T_{r,t}:=\{(\lambda,\tau)\in(0,\infty)^2:t-r\le\lambda,\ \tau+\lambda\le t+r,\ k\le\tau-\lambda\le t-r\}.
\]
\end{lem}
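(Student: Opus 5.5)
This is Agemi's lemma \cite{Agemi91}; I would prove it by taking spherical means of the integral equation (\ref{integral}), reducing to a $1+1$ dimensional representation in the radial variable, and discarding nonnegative parts of the integration domain.

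\textbf{Step 1 (spherical means).} Averaging (\ref{integral}) over $|x|=r$ gives
\[
\wt{u}=\e\wt{u^0}+\wt{L(u^2)}.
\]
Since the wave operator commutes with rotations, $\wt{L(v)}$ is the solution of the radial problem $w_{tt}-w_{rr}-r^{-1}w_r=\wt{v}$ with zero data, where $\wt{v}=\wt{u^2}$. Because $u^2\ge0$, Jensen's inequality (Cauchy--Schwarz on the circle) gives
\[
\wt{u^2}\ge\wt{u}^2 .
\]
The Duhamel operator has a nonnegative kernel, so
\[
\wt{L(u^2)}\ge \wt{L}\bigl(\wt{u}^2\bigr),
\]
where $\wt{L}$ is the radial Duhamel operator. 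This reduces the problem to a lower bound for the positive kernel of $\wt{L}$ acting on a nonnegative radial function $F(\lambda,\tau)$.

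\textbf{Step 2 (the kernel).} The classical formula for radial solutions in two dimensions writes $\wt{L}(F)(r,t)$ as
\[
\frac{1}{2\pi}\int_0^t d\tau\int_{|r-\lambda|\le t-\tau} \lambda F(\lambda,\tau)\,K(\lambda,t-\tau;r)\,d\lambda
\]
plus a contribution from $\lambda\le t-\tau-r$. Here $K$ comes from integrating the angular variable in $\bigl((t-\tau)^2-|x-y|^2\bigr)^{-1/2}$, i.e.
\[
K=\int_0^{2\pi}\frac{d\theta}{\sqrt{(t-\tau)^2-r^2-\lambda^2+2r\lambda\cos\theta}}\,\chi ,
\]
with $\chi$ restricting to the region where the radicand is positive. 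Since everything is nonnegative, I restrict to the subregion $T_{r,t}$, which lies in $t-\tau\ge\lambda-r\ge0$ and $t+r\ge\tau+\lambda$. On that subregion I need the lower bound
\[
K\ge \frac{c}{\sqrt{r\lambda}}
\]
with the precise constant giving the factor $\frac{1}{2\pi\sqrt r}\sqrt\lambda$ once multiplied by $\lambda$. Writing $(t-\tau)^2-(r-\lambda)^2=A$ and $4r\lambda\sin^2(\theta/2)=B\sin^2$, the integral becomes an elliptic-type integral
\[
\int \frac{d\theta}{\sqrt{A-4r\lambda\sin^2(\theta/2)}} .
\]
Its value is at least the integral over the admissible arc. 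A change of variables $s=\sin(\theta/2)$ bounds it below by
\[
\frac{1}{\sqrt{r\lambda}}\int_0^{\min(1,\sqrt{A/4r\lambda})}\frac{ds}{\sqrt{A/(4r\lambda)-s^2}} ,
\]
which is at least $\frac{1}{\sqrt{r\lambda}}$ times a positive constant, e.g. $\pi/2$ when $A\le 4r\lambda$. I expect this to produce exactly the constant $\frac{1}{2\pi\sqrt r}$ after combining with $\frac{1}{2\pi}$ and the normalization of $\wt{\ }$.

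\textbf{Step 3 (geometry and support).} The restriction $k\le\tau-\lambda$ is harmless: by (\ref{supp_sol}) the integrand vanishes for $\lambda>\tau+k$, and the region is only shrunk anyway. The conditions $k\le t-r\le r$ make $T_{r,t}$ nonempty and ensure $\lambda\ge t-r$. I must also check that on $T_{r,t}$ the quantity $A$ is nonnegative, so the region lies inside the domain of dependence, i.e. $|r-\lambda|\le t-\tau$. This holds because $\lambda-r\le t-\tau$ follows from $\tau+\lambda\le t+r$, and $r-\lambda\le t-\tau$ follows from $\tau-\lambda\le t-r$.

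The main obstacle is Step 2: obtaining a clean uniform lower bound $K\gtrsim(r\lambda)^{-1/2}$ on $T_{r,t}$ with the right constant, handling both regimes $A\le4r\lambda$ and $A>4r\lambda$. I would try first to bound $A\le 4r\lambda$ on $T_{r,t}$, using $t-\tau\le r+\lambda$ together with the constraints defining $T_{r,t}$. In that regime the $s$-integral equals $\pi/2$ exactly, which gives the stated constant.
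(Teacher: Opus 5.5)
Your argument is correct. It takes a self-contained route: the paper does not prove this lemma. It quotes (2.12) of Agemi and only remarks that the linear term $\e\wt{u^0}$ is carried along from the integral equation (\ref{integral}).

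You derive the estimate directly:
\begin{itemize}
\item linearity of the spherical mean and rotation invariance of $L$ give $\wt{u}=\e\wt{u^0}+L(\wt{u^2})$;
\item Cauchy--Schwarz on the circle gives $\wt{u^2}\ge\wt{u}^2$;
\item positivity of the Duhamel kernel lets you replace $\wt{u^2}$ by $\wt{u}^2$ and shrink the domain to $T_{r,t}$;
\item the angular (elliptic-type) integral is then bounded from below.
\end{itemize}
Your route makes visible why $\e\wt{u^0}$ survives for free and that only positivity is used, and it even gives a better constant. The citation buys brevity.

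A few loose ends need tidying.

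\textbf{The regime you left open never occurs.} Since $\tau\ge\lambda+k$ and $\lambda\ge t-r$, one has $\tau+\lambda\ge2\lambda+k>t-r$, i.e.\ $t-\tau<r+\lambda$. Hence $A<4r\lambda$ on all of $T_{r,t}$. Together with your check $|r-\lambda|\le t-\tau$, this places $T_{r,t}$ inside the Duhamel domain. You also need $\tau\le t$, which follows by adding $\tau-\lambda\le t-r$ and $\tau+\lambda\le t+r$.

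\textbf{The constant.} Tracking the factor from $\theta\mapsto\theta/2$ and the symmetry of $\sin^2$, with $a^2=A/(4r\lambda)\le1$, one gets
\[
K=\frac{2}{\sqrt{r\lambda}}\int_0^{a}\frac{ds}{\sqrt{1-s^2}\sqrt{a^2-s^2}}\ge\frac{\pi}{\sqrt{r\lambda}}.
\]
This gives the constant $\frac{1}{2\sqrt r}$. Your cruder bound $\frac{\pi}{2\sqrt{r\lambda}}$ still gives $\frac{1}{4\sqrt r}$. So the constant is not ``exactly'' $\frac{1}{2\pi\sqrt r}$ but larger, which suffices because the integrand is nonnegative.

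\textbf{The restriction $k\le\tau-\lambda$.} This is justified purely by positivity (shrinking the region). The support property (\ref{supp_sol}) concerns $\lambda\le\tau+k$ and plays no role here.
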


\par\noindent
{\bf Proof.} See (2.12) in Agemi \cite{Agemi91} which doesn't have $\e\wt{u^0}$,
but it is trivial to add it in view of (\ref{integral}).
\hfill$\Box$

\vskip10pt
To have the first step of the iteration argument,
we need the following estimate for solution to the free wave equation.

\begin{lem}[\cite{Hormander97, L90, IKTW19}]
\label{lem:linear}
Assume that $(f,g)\in C_0^3(\R^2)\times C_0^2(\R^2)$ satisfies (\ref{supp_initial}).
Then, there exist positive constants $D_1,D_2$ such that
\begin{equation}
\label{est:linear}
\begin{array}{l}
\d\left|u^0(x,t)-\frac{D_1}{\sqrt{t+|x|+2k}\sqrt{t-|x|+2k}}\int_{\R^2}g(x)dx\right|\\
\d\le\frac{D_2}{\sqrt{t+|x|+2k}(t-|x|+2k)^{3/2}}
\end{array}
\end{equation}
in $\{(x,t)\in\R^2\times[0,\infty)\ :\ |x|\le t+k\}$.
\end{lem}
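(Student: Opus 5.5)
The statement to prove is Lemma~\ref{lem:linear}, the asymptotic profile of the free solution in two space dimensions. I would start from the Poisson formula
\[
u^0(x,t)=\frac{1}{2\pi}\int_{|x-y|\le t}\frac{g(y)}{\sqrt{t^2-|x-y|^2}}dy+\partial_t\left(\frac{1}{2\pi}\int_{|x-y|\le t}\frac{f(y)}{\sqrt{t^2-|x-y|^2}}dy\right).
\]
I would treat the $g$-term and the $f$-term separately, and distinguish the region $t-|x|\le 2k$ (near the light cone, including its interior part $t-|x|\in[-k,2k]$) from the region $t-|x|\ge 2k$.

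In the near-cone region both $1/(\sqrt{t+|x|+2k}\sqrt{t-|x|+2k})$ and the error weight $1/(\sqrt{t+|x|+2k}(t-|x|+2k)^{3/2})$ are comparable to $(t+|x|+2k)^{-1/2}$. So it suffices to show $|u^0|\le C(1+t)^{-1/2}$ there, since the main term is bounded by the same quantity. For this I would use the standard decay estimate for the 2D wave equation. It can be obtained by writing $t^2-|x-y|^2=(t-|x-y|)(t+|x-y|)$, using $t+|x-y|\ge ct$ on the support, and integrating the singularity $(t-|x-y|)^{-1/2}$ in polar coordinates centred at $x$. The $f$-term is handled the same way after moving the $t$-derivative inside; this is done by the change of variables $y=x+tz$, which costs derivatives of $f$ and is where the $C^3$ regularity of $f$ is used.

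In the far region $t-|x|\ge 2k$ we have $|x-y|\le |x|+k\le t-k$ for $y$ in the support, so the kernel is smooth. I would expand
\[
\frac{1}{\sqrt{t^2-|x-y|^2}}=\frac{1}{\sqrt{t^2-|x|^2}}+O\!\left(\frac{|y|\,(|x|+k)}{(t^2-|x|^2)^{3/2}}\right),
\]
which follows by the mean value theorem in the variable $|x-y|^2$. Since $|x|\le t$, the error is $O\big(t^{-1/2}(t-|x|)^{-3/2}\big)$. This produces the main term $(2\pi)^{-1}(t^2-|x|^2)^{-1/2}\int g$ with $D_1=1/(2\pi)$. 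Replacing $t\pm|x|$ by $t\pm|x|+2k$ costs a relative error $O(k/(t-|x|+2k))$, which fits into the error weight. For the $f$-term I would differentiate the smooth kernel in $t$. This gives $-t(t^2-|x-y|^2)^{-3/2}$, which is $O(t\cdot t^{-3/2}(t-|x|)^{-3/2})=O(t^{-1/2}(t-|x|)^{-3/2})$, again of error size. Finally, since $t+|x|+2k\sim t+k$ and $t-|x|+2k\sim t-|x|$ in this region, all constants can be collected into $D_2$.

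The main obstacle is the near-cone region, in particular getting the $f$-contribution bounded by $C(1+t)^{-1/2}$ uniformly up to and slightly beyond the light cone, where the derivative of the singular kernel is not integrable directly. There I would first try the scaling $y=x+tz$,
\[
u_f=\partial_t\left(\frac{t}{2\pi}\int_{|z|\le1}\frac{f(x+tz)}{\sqrt{1-|z|^2}}dz\right).
\]
This moves the derivative onto $f$ and yields terms of the form $t\,\nabla f(x+tz)\cdot z$. These are controlled by the same $t^{-1/2}$ estimate as the $g$-term, since $\nabla f$ has compact support.
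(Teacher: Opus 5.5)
Your proposal is correct, and it is essentially the argument the paper relies on. The paper gives no independent proof here, only a pointer to Lemma 2.1 and identity (2.5) of \cite{IKTW19}, and your splitting reproduces that standard Poisson-formula argument in self-contained form: split off the main term $\frac{1}{2\pi}(t^2-|x|^2)^{-1/2}\int_{\R^2}g\,dy$ where $t-|x|\ge 2k$, and estimate the remainder and the $f$-part as in the zero-moment case; near the cone both weights are comparable to $(t+|x|+2k)^{-1/2}$, so the usual $(1+t)^{-1/2}$ decay bound suffices. One small inaccuracy: your near-cone scaling treatment of the $f$-term only needs $f\in C^1$, so that is not where the $C^3$ hypothesis is actually used.
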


\begin{rem}
This lemma first appeared in H\"{o}rmander \cite{Hormander97} on a partial space-time domain.
Later, Lindblad \cite{L90} refined it.
Finally, Imai, Kato, Takamura and Wakasa \cite{IKTW19} extended it to the full domain.
\end{rem}

\par\noindent
{\bf Proof of Lemma \ref{lem:linear}.} In \cite{IKTW19},
only an upper estimate of $|u^0(x,t)|$ was derived.
But, in view of (2.5) in \cite{IKTW19},
the present lemma can be proved by exactly the same argument as that used in the proof of Lemma 2.1 in \cite{IKTW19}.
\hfill$\Box$
\vskip10pt

\par\noindent
{\bf Proof of Theorem \ref{thm:main} by slicing the blow-up set.}
We note that (\ref{supp_sol}) implies that
\[
\mbox{supp}\ \wt{u}\subset\{(r,t)\in[0,\infty)\times[0,T]\ :\ r\le t+k\}.
\]

\par
It follows from Lemma \ref{lem:linear} and the assumption $\d \int_{\R^2}g(x)dx>0$ 
that there exist positive constants $B=B(f,g)$ and $M=M(f,g)$ such that
\begin{equation}
\label{est:0}
\wt{u^0}(r,t)\ge\frac{B}{\sqrt{t+r}\sqrt{t-r}}\quad\mbox{for}\ M\le t-r\le r.
\end{equation}
Without loss of generality, we may assume that
\[
M\ge2k.
\] 
Introducing characteristic variables by
\[
\alpha:=\tau+\lambda,\ \beta:=\tau-\lambda
\]
into the right-hand side of (\ref{basic}) and using the inequality
\[
t+r\ge 3(t-r)
\]
which is equivalent to $t-r\le r$, we have that
\begin{equation}
\label{frame}
\wt{u}(r,t)\ge\frac{D}{\sqrt{t+r}}\int_{l_jM}^{t-r}d\beta
\int_{2(t-r)+\beta}^{3(t-r)}\sqrt{\alpha-\beta}\wt{u}(\lambda,\tau)^2d\alpha
\quad\mbox{in}\ \Sigma_j,
\end{equation}
where $D:=(4\sqrt{2}\pi)^{-1}>0$ and \lq\lq sliced" domains $\Sigma_j\ (j\in\N\cup\{0\})$ are defined by
\[
\Sigma_j:=\{(r,t)\in(0,\infty)\times[0,T)\ :\ l_jM\le t-r\le r\}
\]
with a \lq\lq slicer"
\[
l_j:=\sum_{i=0}^j\frac{1}{2^i}.
\]
We note that
\[
\Sigma_{j+1}\subset\Sigma_j\quad\mbox{and}\quad 1\le l_j<2
\]
hold for all $j\in\N\cup\{0\}$.
\par
The first estimate of the iteration argument is established as below.
Using the trivial inequality $\wt{u}\ge\e\wt{u^0}$
and substituting the lower bound in  (\ref{est:0}) into the right-hand side of (\ref{frame}) with $j=0$,
we have that
\[
\begin{array}{ll}
\wt{u}(r,t)&
\d\ge\frac{DB^2\e^2\sqrt{2(t-r)}}{\sqrt{t+r}}\int_{l_0M}^{t-r} d\beta \ \frac{1}{\beta}
\int_{2(t-r)+\beta}^{3(t-r)}\frac{1}{\alpha}d\alpha\\
&\d\ge\frac{\sqrt{2}DB^2\e^2}{3\sqrt{t+r}\sqrt{t-r}}
\int_{l_0M}^{t-r}\frac{t-r-\beta}{\beta}d\beta
\end{array}
\]
in $\Sigma_0$.
Then, the $\beta$-integral in the last line above can be estimated as follows.
Integration by parts yields 
\[
\begin{array}{ll}
\d\int_{l_0M}^{t-r}\frac{t-r-\beta}{\beta}d\beta
&\d=\int_{l_0M}^{t-r}(t-r-\beta)\left(\log\frac{\beta}{l_0M}\right)'d\beta\\
&\d=\int_{l_0M}^{t-r}\log\frac{\beta}{l_0M}d\beta.
\end{array}
\]
Hence it follows from $\Sigma_1\subset\Sigma_0$ and
\[
\frac{l_0}{l_1}(t-r)\ge l_0M\quad\mbox{for}\ (r,t)\in\Sigma_1
\]
that
\[
\begin{array}{ll}
\d\int_{l_0M}^{t-r}\frac{t-r-\beta}{\beta}d\beta
&\d\ge\int_{l_0(t-r)/l_1}^{t-r}\log\frac{\beta}{l_0M}d\beta\\
&\d\ge\left(1-\frac{l_0}{l_1}\right)(t-r)\log\frac{t-r}{l_1M}
\end{array}
\]
in $\Sigma_1$ which yields that
\begin{equation}
\label{est:1}
\wt{u}(r,t)\ge\frac{C_1}{\sqrt{t+r}}\left(\frac{t-r}{l_1M}\right)^{1/2}\log\frac{t-r}{l_1M}
\quad\mbox{in}\ \Sigma_1,
\end{equation}
where
\[
C_1:=\frac{\sqrt{2}DB^2}{3^2}\e^2>0.
\]
This estimate illustrates the essence of the slicing method.

\par
Now, in order to employ mathematical induction, we assume an estimate
\begin{equation}
\label{est:j}
\wt{u}(r,t)\ge\frac{C_j}{\sqrt{t+r}}\left(\frac{t-r}{l_jM}\right)^{a_j}\left(\log\frac{t-r}{l_jM}\right)^{b_j}
\quad\mbox{in}\ \Sigma_j,
\end{equation}
where all the positive constants $a_j,b_j,C_j$ are defined later.
But in view of (\ref{est:1}),  this estimate is true for $j=1$ with
\begin{equation}
\label{1}
a_1=\frac{1}{2},\quad b_1=1,\quad C_1=\frac{\sqrt{2}DB^2}{3^2}\e^2.
\end{equation}
Substituting the estimate (\ref{est:j}) into the right-hand side of (\ref{frame}), we obtain
\[
\begin{array}{ll}
\wt{u}(r,t)&
\d\ge\frac{DC_j^2\sqrt{2(t-r)}}{\sqrt{t+r}}
\int_{l_jM}^{t-r}d\beta \left(\frac{\beta}{l_jM}\right)^{2a_j}\left(\log\frac{\beta}{l_jM}\right)^{2b_j}
\int_{2(t-r)+\beta}^{3(t-r)}\frac{1}{\alpha}d\alpha\\
&\d\ge\frac{\sqrt{2}DC_j^2}{3\sqrt{t+r}\sqrt{t-r}}
\int_{l_jM}^{t-r}(t-r-\beta)\left(\frac{\beta}{l_jM}\right)^{2a_j}\left(\log\frac{\beta}{l_jM}\right)^{2b_j}d\beta
\end{array}
\]
in $\Sigma_j$.
Hence it follows from $\Sigma_{j+1}\subset\Sigma_j$ and
\[
\frac{l_j}{l_{j+1}}(t-r)\ge l_jM\quad\mbox{for}\ (r,t)\in\Sigma_{j+1}
\]
that the $\beta$-integral in the last line above can be estimated in $\Sigma_{j+1}$
by applying the slicing method again,
\[
\begin{array}{l}
\d\int_{l_jM}^{t-r}(t-r-\beta)\left(\frac{\beta}{l_jM}\right)^{2a_j}\left(\log\frac{\beta}{l_jM}\right)^{2b_j}d\beta\\
\d\ge\int_{l_j(t-r)/l_{j+1}}^{t-r}(t-r-\beta)\left(\frac{\beta}{l_jM}\right)^{2a_j}\left(\log\frac{\beta}{l_jM}\right)^{2b_j}d\beta\\
\d\ge\left(\frac{t-r}{l_{j+1}M}\right)^{2a_j}\left(\log\frac{t-r}{l_{j+1}M}\right)^{2b_j}
\int_{l_j(t-r)/l_{j+1}}^{t-r}(t-r-\beta)d\beta\\
\d\ge\frac{1}{2}\left(1-\frac{l_j}{l_{j+1}}\right)^2\left(\frac{t-r}{l_{j+1}M}\right)^{2a_j}
\left(\log\frac{t-r}{l_{j+1}M}\right)^{2b_j}(t-r)^2\\
\d\ge\frac{(t-r)^2}{2^{2j+5}}
\left(\frac{t-r}{l_{j+1}M}\right)^{2a_j}\left(\log\frac{t-r}{l_{j+1}M}\right)^{2b_j}
\end{array}
\]
since
\[
1-\frac{l_j}{l_{j+1}}=\frac{1}{2^{j+1}l_{j+1}}\ge\frac{1}{2^{j+2}}.
\]
Summing up, we obtain that
\begin{equation}
\label{j+1}
\wt{u}(r,t)\ge\frac{\sqrt{2}DC_j^2}{2^{2j+5}\cdot3\sqrt{t+r}}\left(\frac{t-r}{l_{j+1}M}\right)^{2a_j+\frac{3}{2}}
\left(\log\frac{t-r}{l_{j+1}M}\right)^{2b_j}
\end{equation}
in $\Sigma_{j+1}$.

\par
Now we are in a position to define sequences $\{a_j\},\{b_j\}$ by
\[
a_{j+1}=2a_j+\frac{3}{2},\quad b_{j+1}=2b_j,
\]
which can be solved by (\ref{1}) as
\[
a_j=2^j-\frac{3}{2},\quad b_j=2^{j-1}\quad(j\in\N).
\]
Hence it follows from (\ref{j+1}) that
\[
\wt{u}(r,t)
\ge\frac{\sqrt{2}DC_j^2}{2^{2j+5}\cdot3\sqrt{t+r}}
\left(\frac{t-r}{l_{j+1}M}\right)^{2^{j+1}-\frac{3}{2}}\left(\log\frac{t-r}{l_{j+1}M}\right)^{2^j}
\]
in $\Sigma_{j+1}$.
So we have to set
\[
C_{j+1}=\frac{E}{4^j}C_j^2,
\]
where
\[
E:=\min\left\{1, \frac{\sqrt{2}D}{2^5\cdot3}\right\}>0.
\]
This recurrence relation yields
\[
C_{j+1}=\frac{E^{1+2+\cdots+2^{j-1}}C_1^{2^j}}{4^{j+2(j-1)+\cdots+2^{j-1}}},
\quad
C_1=\frac{\sqrt{2}DB^2}{3^2}\e^2.
\]
We note that
\[
j+2(j-1)+\cdots+2^{j-1}=2^{j-1}\left(\frac{j}{2^{j-1}}+\frac{j-1}{2^{j-2}}+\cdots+1\right)
\]
and there is a positive constant $S$ such that
\[
\frac{j}{2^{j-1}}+\frac{j-1}{2^{j-2}}+\cdots+1\nearrow S\quad(j\rightarrow\infty)
\]
by the d'Alembert criterion.

\par
Therefore we obtain that
\[
\wt{u}(r,t)\ge\frac{E^{-1}}{\sqrt{t+r}}\left(\frac{t-r}{2M}\right)^{-3/2}\exp\left(2^j \log I(r,t)\right)
\quad\mbox{in}\ \Sigma_\infty
\]
for all $j\in\N$, where we set
\[
\Sigma_\infty:=\{(r,t)\in(0,\infty)\times[0,T)\ :\ 2M\le t-r\le r\}
\]
and
\[
I(r,t):=\frac{E C_1}{4^{S/2}}\left(\frac{t-r}{2M}\right)^2\log\frac{t-r}{2M}.
\]
If there is a point $(r_0,t_0)\in\Sigma_\infty$ such that $I(r_0,t_0)>1$,
then $\wt{u}$ as well as $u$ cannot be a classical solution of (\ref{IVP}) by letting $j\rightarrow\infty$.
A sufficient condition for the existence of such a point $(r_0,t_0)$ is
\[
I(r_0,t_0)=B_1t_0^2 \e^2 \log\frac{t_0}{4M}>1
\]
by setting $t_0=2r_0$ and $\displaystyle  B_1:=\frac{\sqrt{2}DB^2E}{4^{S/2+2} \cdot 3^2M^2}$.
\par
Now we set
\begin{align*}
 t_0=B_2^{-1} a(\e) \quad \mbox{and} \quad a(\e_0)=(8M)^2,
\end{align*}
where $B_2=\min \{1, (B_1/4)^{1/2} \}$ and note that
$a(\e)$ is monotonously decreasing in $\e$ and 
$\displaystyle \lim_{\e\to+0}a(\e)=+\infty$.
Hence, for $0< \e \leq \e_0$, we obtain that
\begin{align*}
	t_0\geq (8M)^2 \quad \mbox{i.e.} \quad \frac{1}{2}\log t_0\geq \log 8M,
\end{align*}
so that
\begin{align*}
 I(t_0/2,t_0)&\geq B_1 t_0^2 \e^2 \log \frac{t_0+1}{8M}\\
 &\geq 2 B_2^2 t_0^2\e^2\log(t_0+1)\\
 &\geq 2a(\e)^2 \e^2 \log\left(a(\e)+1 \right)=2.
\end{align*}
Therefore $u$ cannot exist as far as $T$ satisfies that
\[
T>B_2^{-1}a(\e)\quad\mbox{for}\ 0<\e\le\e_0.
\]
This completes the proof of Theorem \ref{thm:main}.
\hfill$\Box$

\begin{rem}
In view of the proof of the first step of the iteration,
there is no logarithmic term in (\ref{est:1}) if we have an estimate such as
\[
\wt{u^0}(r,t)\ge\frac{B}{\sqrt{t+r}(t-r)^{3/2}}\quad\mbox{in}\ \Sigma_0
\]
when
\[
\int_{\R^2}g(x)dx=0
\]
instead of (\ref{est:0}).
This estimate provides the upper bound in the third case of (\ref{lifespan_2d}).
\end{rem}

\vskip10pt
\par\noindent
{\bf Acknowledgment.} 
All the authors are partially supported by the Grant-in-Aid for Scientific Research (A) (No.22H00097),
Japan Society for the Promotion of Science.
Moreover, the second author is partially supported by
the Grant-in-Aid for Scientific Research (C) (No.24K06819),
Japan Society for the Promotion of Science, and
the Science Committee of the Ministry of Science and Higher Education of the Republic of Kazakhstan (AP26195417).
The third author is partially supported by the Grant-in-Aid for Scientific Research (C) (No.25K07093),
Japan Society for the Promotion of Science.
Finally, all authors sincerely thank the reviewer for the numerous insightful comments
and helpful suggestions, which have greatly improved the manuscript.


\bibliographystyle{plain}

\end{document}